\crefname{section}{Section}{Sections}
\crefname{equation}{Equation}{Equation}
\crefname{lemma}{Lemma}{Lemmata}
\crefname{bem}{Remark}{Remarks}
\crefname{kor}{Corollary}{Corollaries}
\crefname{defin}{Definition}{Definitions}
\crefname{prop}{Proposition}{Propositions}
\crefname{folg}{Conclusion}{Conclusions}
\crefname{bsp}{Example}{Examples}
\crefname{claim}{Claim}{Claims}
\newtheorem{thm}{Proposition}[section]
\newtheorem{theorem}[thm]{Theorem}
\newtheorem{lem}[thm]{Lemma}
\newtheorem{claim}[thm]{Claim}
\theoremstyle{definition}
\newtheorem{defi}[thm]{Definition}
\newtheorem{bei}[thm]{Example}
\newtheorem{bem}[thm]{Remark}
\newcommand{\R}[0]{{\mathbb R}}
\DeclareMathOperator{\Hom}{Hom}
\DeclareMathOperator{\cx}{cx}
\title{Complexity of gauge bounded Cartier algebras}
\begin{document}

\author[H.~July]{Henry July}
\address{Bergische Universit\"at Wuppertal, Fakult\"at 4, Gau\ss stra\ss e 20, 42119 Wuppertal, Germany}
\email{\href{mailto:hjuly@uni-wuppertal.de}{hjuly@uni-wuppertal.de}}
\author[A.~St\"abler]{Axel St\"abler}
\address{Johannes Gutenberg-Universit\"at Mainz\\ Fachbereich 08\\ Staudingerweg 9\\ 55099 Mainz\\Germany} 
\email{\href{mailto:staebler@uni-mainz.de}{staebler@uni-mainz.de}}
\keywords{Cartier algebra, Gauge boundedness, Complexity}
\subjclass{13A35}
\thanks{We thank Manuel Blickle for useful comments on an earlier draft.}
\begin{abstract}
We show that a gauge bounded Cartier algebra has finite complexity. We also give an example showing that the converse does not hold in general.
\end{abstract}
\maketitle
\vspace{-0.61em}

\section{Introduction}
The central notion of this note is the following

\begin{defi}
Let $R$ be an $F$-finite noetherian (commutative) ring.
\begin{enumerate}[(i)]
\item We denote by \[\mathcal{C}^R=\bigoplus_{e\geq 0}\mathcal{C}^R_e=\bigoplus_{e\geq 0}\Hom_R(F^e_\ast R,R) \] the \emph{total Cartier algebra} of $R$. We note that this is a non-commutative ring, where multiplication $\varphi \cdot \psi$ is defined as $\varphi \circ F_\ast^a \psi$ for $\varphi \in \mathcal{C}_a^R$ and $\psi \in \mathcal{C}_b^R$. Also note that any $\mathcal{C}_e^R$ is an $R$-bimodule where the left module structure is the ordinary one and the right module structure is given by noting that $F_\ast^e R = R$ as rings. These module structures are then related by $\varphi \cdot r^{p^e} = r \cdot \varphi$ for any $\varphi \in \mathcal{C}_e^R$.
\item We call a graded subring $\mathcal{D}\subseteq \mathcal{C}^R$ a Cartier subalgebra of $R$, if $\mathcal{D}_0=R$ and $\mathcal{D}_e\neq 0$ holds for some $e>0$. We write $\mathcal{D}_+$ for $ \bigoplus_{e \geq 1} \mathcal{D}_e$.
\end{enumerate}
\end{defi}

Given an $F$-finite field $k$ we fix an isomorphism $k \to F^! k = \Hom_k(F_\ast k, k)$ (where on the right we consider the right $k$-module structure). Denote the adjoint map $F_\ast k \to k$ by $\lambda$. If $R = k[x_1,\ldots, x_n]$ is a polynomial ring over an $F$-finite field, then $\Hom_R(F_\ast^e R, R)$ is generated as a right-$R$-module by the so-called Cartier operator  $\kappa^e\colon F_\ast^e R \to R$
\[ \kappa^e(c x^\alpha) = \begin{cases} \lambda^e(c), & \text{ if } \alpha_i = p^e -1 \text { for all } 1 \leq i \leq n, \\ 0, &\text{ if } \alpha_i < p^e -1 \text{ for some } i. \end{cases}\]
Little is lost if the reader assumes that $k$ is perfect and $\lambda$ is defined as the $p$th root map. In particular, in the case that $R$ is a polynomial ring, $\mathcal{C}^R$ is generated by $\kappa$ as an $R$-algebra.

It is understood that $\mathcal{C}^R$ is not finitely generated in many cases of interest if $R$ is not a Gorenstein ring (\cite{katzmannonfg}, \cite{MR2905024}). Due to this, weaker finiteness notions have been introduced. The first one is that of the \emph{complexity} of the Cartier algebra which loosely speaking measures how fast the number of generators as a right $R$-module grows with the homogeneous degree.

\begin{defi}[{see \cite{enescuyaofcomplexity}}]
Let $(R, \mathcal{D})$ be as above.
\begin{enumerate}[(i)]
\item For $e \geq -1$ let $G_e \coloneqq G_e(\mathcal{D})$ be the subring of $\mathcal{D}$ generated by elements of degree $\leq e$. We write $k_e$ for the minimal number of homogeneous generators of $G_e$ and call $(k_e - k_{e-1})_e$ the \emph{complexity sequence} of $\mathcal{D}$.
\item Provided that the $k_e$ are finite the complexity of $\mathcal{D}$ is defined as \begin{align*}
	\cx(\mathcal{D})\coloneqq\inf\{n\in \R_{>0}:k_e-k_{e-1}=\mathcal{O}(n^e)\}.
	\end{align*} We follow the convention that the infimum over $\varnothing$ is $\infty$.
	\item The \emph{Frobenius exponent} of $\mathcal{D}$ is defined as $\exp_F(\mathcal{D}) = \log_p(cx(\mathcal{D}))$.
\end{enumerate}

\end{defi}

The second finiteness notion is that of \emph{gauge boundedness} (\cite{blickletestidealsvia}). This notion is very useful in proving various desired properties for test ideal filtrations. From now on we let $S = k[x_1, \ldots, x_n]$ be a polynomial ring in $n$ variables over an $F$-finite field $k$.

\begin{defi}
\label{gaugebound}
\begin{enumerate}[(i)]
\item For each $d \geq 0$ let $S_d$ be the $k$-subspace of $S$ which is generated by all monomials $x_1^{\alpha_1}\cdots x_n^{\alpha_n}$ such that $0\leq \alpha_1,...,\alpha_n\leq d$. We will loosely refer to this as the \emph{maximum norm}. Let $I \subseteq S$ be an ideal and $R = S/I$. The $S_d$ induce an increasing filtration on $R$ by setting $R_{- \infty} = 0$ and $R_d = S_d \cdot 1_R$ for $d \geq 0$. For $r \in R$ we define a \emph{gauge} $\delta\colon R \to \mathbb{N} \cup \{- \infty\}$ via
\begin{align*}
\delta(r)=\begin{cases}
\ -\infty, &\textit{if}\ r=0,\\ \ d,\ &\textit{if}\ r\neq 0\ \textit{and}\ r\in R_d\smallsetminus R_{d-1}.\end{cases}
\end{align*}
\item Let $\mathcal{D} \subseteq \mathcal{C}^R$ be a Cartier algebra. We say that $\mathcal{D}$ (or the pair $(R, \mathcal{D})$) is \emph{gauge bounded} if there is a set \[\{ \psi_i \, \vert \, \psi_i \in \mathcal{D}_{e_i}, e_i \geq 1 \}\] which generates $\mathcal{D}_{+}$ as a right $R$-module and a constant $K$ such that
\[\delta(\psi_i(r)) \leq \frac{\delta(r)}{p^{e_i}} + K \] for all $r \in R$.
\end{enumerate}
\end{defi}

Overall, the notion of gauge boundedness is still poorly understood. While there are examples of non-gauge bounded Cartier algebras (\cite{blicklestaeblerfuntestmodules}), it is unknown whether the full Cartier algebra $\mathcal{C}^R$ for a quotient $R$ of $S$ is always gauge bounded or not.

Our goal in this note is to show that gauge boundedness implies finite complexity and that the converse is not true. We also give an elementary proof of a result of Enescu--Perez (\cite[Theorem 3.9]{enescuperezfexponent}) at the expense of a worse bound.
\section{The result}
Recall that throughout $S = k[x_1,\ldots, x_n]$ is a polynomial ring over an $F$-finite field $k$.

When dealing with $\mathcal{C}^R$ for $R = S/I$ we have by Fedder (\cite{fedderisom}) an isomorphism of $R$-modules
\[\Psi\colon \frac{(I^{[p^e]}:I)}{I^{[p^e]}} \longrightarrow \Hom_R(F_\ast^e R, R). \]
In particular, for a Cartier subalgebra $\mathcal{D} \subseteq \mathcal{C}^R$ we may also study the action of the Cartier algebra \[\bigoplus_{e \geq 0} \kappa^e J_e \] on $R$, where $\kappa$ is a generator of $\Hom_S(F_\ast S, S)$ and $J_e = \Psi^{-1}(\mathcal{D}_e)$. 

\begin{lem}
\label{keylemma}
Let $I \subset S$ be an ideal, $R = S/I$ and $\mathcal{D} \subseteq \mathcal{C}^R$ a subalgebra on $R$. For each $e$ we fix a minimal system of generators $(f_1, \ldots, f_{a_e})$ of $J_e$ and denote the maximum of the $\deg f_i$ by $d(J_e)$. If $d(J_e) \leq K p^{te}$ for some constants $K$ and $t$, then the complexity sequence $(k_e - k_{e-1})$ is in $\mathcal{O}(p^{etn})$.
\end{lem}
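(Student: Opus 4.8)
The plan is to pin everything on the single quantity $a_e$, the minimal number of right $R$-module generators of $\mathcal{D}_e$, and to show that the complexity increment $k_e - k_{e-1}$ is dominated by it, after which a monomial count finishes the job. Write $m_j$ for the minimal number of homogeneous $R$-algebra generators of $\mathcal{D}$ in degree $j$. A graded Nakayama argument identifies $m_j$ with the minimal number of generators of the right $R$-module $\mathcal{D}_j/(\mathcal{D}_+ \cdot \mathcal{D}_+)_j$. Since $(\mathcal{D}_+\cdot\mathcal{D}_+)_j$ only involves products of elements of degree strictly below $j$, all of which lie in $G_{j-1} \subseteq G_e$ whenever $j \le e$, the same count computes the number of degree-$j$ algebra generators needed for the subring $G_e$. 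Hence $k_e = \sum_{j=1}^e m_j$, and therefore $k_e - k_{e-1} = m_e$.

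Next I would observe that $\mathcal{D}_e/(\mathcal{D}_+\cdot\mathcal{D}_+)_e$ is a quotient of $\mathcal{D}_e$, so $m_e$ is at most the minimal number of right $R$-module generators of $\mathcal{D}_e$ itself. Fedder's isomorphism $\Psi$ identifies $\mathcal{D}_e$ with $J_e/I^{[p^e]}$ as right $R$-modules, and by hypothesis the latter is generated by the images of the minimal system $f_1, \ldots, f_{a_e}$. Consequently $m_e \le a_e$, and it remains only to bound $a_e$.

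For the final estimate I would use a monomial count. Each $f_i$ is homogeneous of degree at most $d(J_e)$, and in any fixed degree $j$ the number of minimal generators of $J_e$ lying in that degree equals $\dim_k$ of the degree-$j$ piece of $J_e \otimes_R k$, which is a subquotient of $S_j$ and hence of dimension at most $\dim_k S_j$, the number of monomials of degree $j$ in $n$ variables. Summing over $0 \le j \le d(J_e)$ gives $a_e \le \dim_k S_{\le d(J_e)} = \mathcal{O}(d(J_e)^n)$. Inserting the hypothesis $d(J_e) \le K p^{te}$ yields $a_e = \mathcal{O}\bigl((Kp^{te})^n\bigr) = \mathcal{O}(p^{etn})$, and combining this with $k_e - k_{e-1} = m_e \le a_e$ completes the proof.

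I expect the main obstacle to be the first paragraph: carefully setting up the graded Nakayama bookkeeping in the noncommutative graded ring $\mathcal{D}$ and verifying that the degree-wise algebra-generator counts for $\mathcal{D}$ and for the subrings $G_e$ genuinely agree, so that $k_e - k_{e-1}$ really equals $m_e$. Once the reduction to $a_e$ is established, both the passage $m_e \le a_e$ via Fedder and the monomial count are routine.
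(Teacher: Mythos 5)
Your overall architecture is sound and shares its core with the paper's proof: everything reduces to the observation that the relevant generators lie among polynomials of degree at most $d(J_e)\leq Kp^{te}$, of which there are at most $\binom{n+Kp^{te}}{n}\leq (Kp^{te}+1)^n=\mathcal{O}(p^{ten})$. Your first two paragraphs are correct (for the Nakayama bookkeeping one should note that the bimodule relation $\varphi\cdot r^{p^e}=r\cdot\varphi$ lets one push all scalars to the right, so that right $R$-module generation of $\mathcal{D}_j/(\mathcal{D}_+\cdot\mathcal{D}_+)_j$ is indeed the correct count), but they are heavier than necessary: the paper simply uses $k_e-k_{e-1}\leq k_e$ and applies the monomial count wholesale, and even if one prefers to bound only the increment, the elementary inequality $k_e\leq k_{e-1}+a_e$ (a minimal homogeneous generating set of $G_{e-1}$ together with right $R$-module generators of $\mathcal{D}_e$ generates $G_e$) already gives $k_e-k_{e-1}\leq a_e$ with no graded Nakayama argument at all; your sharper equality $k_e-k_{e-1}=m_e$ is true but not needed.

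The genuine problem is in your third paragraph. The lemma makes no homogeneity assumption on $I$, hence none on $J_e$ --- indeed the paper stresses, in the remark following \Cref{elementaryproof}, that dispensing with homogeneity is precisely what distinguishes this result from \cite[Theorem 3.9]{enescuperezfexponent}. So you may not assume the $f_i$ are homogeneous, and phrases like ``the degree-$j$ piece of $J_e\otimes_R k$'' (it would in any case be $J_e\otimes_S k$, as $J_e$ is an ideal of $S$) are meaningless for a non-graded ideal; as written, this step fails in exactly the generality the lemma claims. The repair is short and replaces the graded argument entirely: a minimal generating system is $k$-linearly independent, since a generator that is a $k$-linear combination of the others could be omitted; and every $f_i$ lies in the $k$-vector space of polynomials of total degree at most $d(J_e)$, whose dimension is $\binom{n+d(J_e)}{n}$. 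Hence $a_e\leq\binom{n+Kp^{te}}{n}\leq (Kp^{te}+1)^n=\mathcal{O}(p^{ten})$, which is precisely the count carried out in the paper's proof, and with this substitution your argument closes.
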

\begin{proof}
We have the following inequality where for the first equality we simply count the number of monomials of degree $\leq d$ in $S$
\begin{align*}
    k_{e}-k_{e-1}&\leq k_e\leq \sum_{d=0}^{Kp^{te}}\dim_kS_d =\binom{n+Kp^{te}}{Kp^{te}}\\&=\frac{(n+Kp^{te})\cdots(Kp^{te}+1)}{n(n-1)\cdots 1}\\&=\left(1+Kp^{te}\right)\left(1+\frac{Kp^{te}}{n-1}\right)\cdots \left(1+Kp^{te}\right)\\&\leq (Kp^{te}+1)^n\\& \leq K'p^{te n}
\end{align*}
for some constant $K'$. Thus $(k_e - k_{e-1}) \in \mathcal{O}(p^{etn})$ as claimed.
\end{proof}

We can now give an elementary proof of \cite[Theorem 3.9]{enescuperezfexponent} with a worse bound (they achieve $\dim R$ instead of $\dim S = n$).

\begin{theorem}
\label{elementaryproof}Let $I \subset S$ be an ideal, $R = S/I$ and $\mathcal{D} \subseteq \mathcal{C}^R$ a subalgebra on $R$. With notation as above, if $d(J_e)=\mathcal{O}(p^{te})$ holds for some fixed $t$ (and some choice of generating system), then 
\begin{align*}
    \exp_F(\mathcal{D})\leq t\cdot n.
\end{align*}
\end{theorem}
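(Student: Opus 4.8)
The plan is to reduce the statement directly to \Cref{keylemma}. Unwinding the hypothesis $d(J_e) = \mathcal{O}(p^{te})$ produces a constant $K$ such that $d(J_e) \leq K p^{te}$ holds for all $e$ (absorbing the finitely many small values of $e$ into $K$ if necessary). This is precisely the hypothesis required by \Cref{keylemma}, so I would simply invoke that lemma with this $K$ and the given $t$.

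Applying \Cref{keylemma} yields $(k_e - k_{e-1}) \in \mathcal{O}(p^{etn})$. The only manipulation needed is to rewrite the bound in the shape demanded by the definition of complexity: since $p^{etn} = (p^{tn})^e$, we have $(k_e - k_{e-1}) = \mathcal{O}(m^e)$ with $m = p^{tn}$. Hence the real number $p^{tn}$ lies in the set over which the infimum defining $\cx(\mathcal{D})$ is taken (renaming the bound variable in that definition to avoid the clash with the number of variables $n$), and therefore $\cx(\mathcal{D}) \leq p^{tn}$.

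Finally I would take logarithms: as $\log_p$ is monotone increasing, $\exp_F(\mathcal{D}) = \log_p(\cx(\mathcal{D})) \leq \log_p(p^{tn}) = t \cdot n$, which is the claim. I do not anticipate a genuine obstacle, since the theorem is essentially \Cref{keylemma} read through the definitions of complexity and Frobenius exponent; the only points requiring care are the routine bookkeeping with $\mathcal{O}$-notation and keeping the dummy variable of the $\cx$-definition distinct from the polynomial dimension $n$.
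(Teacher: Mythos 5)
Your proposal is correct and follows the same route as the paper: the theorem is indeed just \Cref{keylemma} combined with the definitions of complexity and Frobenius exponent, and the paper's own proof consists of exactly the steps you outline (invoke the lemma, conclude $\cx(\mathcal{D}) \leq p^{tn}$, take $\log_p$). Your extra care in converting the $\mathcal{O}$-hypothesis into a uniform constant $K$ and in noting $p^{etn} = (p^{tn})^e$ is sound bookkeeping that the paper leaves implicit.
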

\begin{proof}
From \Cref{keylemma} we get $k_e-k_{e-1}=\mathcal{O}(p^{ten})$ and therefore also $\cx(\mathcal{D})\leq p^{tn}$. Hence, $\exp_F(\mathcal{D})=\log_p(\cx(\mathcal{D}))\leq \log_p(p^{tn})=tn$.
\end{proof}

\begin{bem}
\begin{enumerate}[(a)]
\item Note that \cite[Theorem 3.9]{enescuperezfexponent} requires the ideal to be homogeneous whereas we do not need this assumption. Alternatively, one can also avoid this by arguing as in \cite[Remark 2.3]{MR3192605} to obtain the bound $t (\dim R +1)$ via \cite[Theorem 3.9]{enescuperezfexponent}.
\item The strategy to apply \Cref{elementaryproof} above is to use \cite[Lemma 3.2, Theorem 3.3]{MR3192605} as outlined in \cite{enescuperezfexponent}. Unfortunately, the proof of \cite[Lemma 3.2]{MR3192605} contains a gap. Namely, it is not clear why equation (5) on page $3525$ holds. Hence, it is at present not known whether there are interesting cases where we can apply \Cref{elementaryproof}.
\end{enumerate}
\end{bem}

\begin{theorem}
Let $I \subseteq S$ be an ideal. If $(R=S/I,\mathcal{D})$ is gauge bounded, then $\cx(\mathcal{D}) \leq p^n$.
\end{theorem}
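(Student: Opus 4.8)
The plan is to reduce the statement to the degree bound of \Cref{elementaryproof} with $t = 1$: once the ideals $J_e$ are shown to be generated in degree $\mathcal{O}(p^e)$, i.e.\ $d(J_e) = \mathcal{O}(p^e)$, we obtain $\exp_F(\mathcal{D}) \le n$ and hence $\cx(\mathcal{D}) = p^{\exp_F(\mathcal{D})} \le p^n$. So the entire content is to extract this degree bound from gauge boundedness. First I would record the translation under Fedder's isomorphism $\Psi$: writing $\psi_i = \Psi(\bar f_i)$ so that $\psi_i = \kappa^{e_i}f_i$ with $f_i \in (I^{[p^{e_i}]}:I)$, the identity $\psi_i \cdot r = \kappa^{e_i}(f_i r)$ shows that right multiplication by $R$ corresponds to multiplication of representatives. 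Since right multiplication by $R = \mathcal{D}_0$ preserves the grading, the subfamily $\{\psi_i \mid e_i = e\}$ of a gauge-bounded generating system of $\mathcal{D}_+$ generates $\mathcal{D}_e$, so the classes $\{\bar f_i \mid e_i = e\}$ generate $J_e$ as an $R$-module. It then suffices to bound, for each such generator, the minimal max-norm degree $\delta(\bar f_i) = \min\{\delta_S(f') \mid f' \equiv f_i \bmod I^{[p^{e_i}]}\}$ by $\mathcal{O}(p^{e_i})$, where $\delta_S$ is the max-norm degree on $S$; a minimal generating system extracted from the $\bar f_i$ then satisfies $d(J_e) = \mathcal{O}(p^e)$ (passing from the max norm to the total degree costs only a factor $n$).

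The heart of the argument is the following estimate, carried out first over $S$. Fix a generator $\psi = \kappa^e f$ of gauge constant $K$, choose a minimal-degree representative $f$, set $D = \delta_S(f)$, and fix a monomial $x^\beta$ occurring in $f$ with $\max_i \beta_i = D$. Define a test monomial $x^\mu$ by taking $\mu_i \in \{0,\ldots,p^e-1\}$ with $\mu_i \equiv p^e-1-\beta_i \pmod{p^e}$, so $\delta_S(x^\mu) \le p^e-1$. In $\kappa^e(f x^\mu) = \sum_{\beta'} c_{\beta'}\kappa^e(x^{\beta'+\mu})$ a monomial survives exactly when $\beta' \equiv \beta \pmod{p^e}$ componentwise, and distinct surviving $\beta'$ produce distinct output exponents $\gamma' = \lfloor (\beta'+\mu)/p^e \rfloor$; hence there is no cancellation, the coefficient of $x^{\lfloor \beta/p^e \rfloor}$ is nonzero, and one computes $\delta_S(\kappa^e(f x^\mu)) = \lfloor D/p^e \rfloor$. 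Feeding $r = \overline{x^\mu}$, which has $\delta(r) \le p^e-1$, into the gauge inequality would then give $\lfloor D/p^e \rfloor \le (p^e-1)/p^e + K$, that is $D < (K+2)p^e$, exactly the bound with $t = 1$.

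The main obstacle is the passage from $S$ to $R$: the gauge inequality bounds $\delta(\psi(r))$ in $R$, whereas the computation above lives in $S$, and a priori $\kappa^e(f x^\mu)$ could reduce modulo $I$ to a representative of strictly smaller max norm, destroying the lower bound. To control this I would work with the leading-form ideal $\mathrm{in}_\delta(I) \subseteq S$ of the max-norm filtration and the associated graded $\mathrm{gr}_\delta R = \mathrm{gr}_\delta S/\mathrm{in}_\delta(I)$, using the standard criterion that $\delta(\bar h) = \delta_S(h)$ precisely when $\mathrm{in}_\delta(h) \notin \mathrm{in}_\delta(I)$. Since $f$ is a minimal-degree representative, $\mathrm{in}_\delta(f) \notin \mathrm{in}_\delta(I^{[p^e]})$; moreover leading forms are compatible with Frobenius powers, $\mathrm{in}_\delta(j^{p^e}) = \mathrm{in}_\delta(j)^{p^e}$, so $(\mathrm{in}_\delta I)^{[p^e]} \subseteq \mathrm{in}_\delta(I^{[p^e]})$ and $\kappa^e$ descends to a Cartier operator on $\mathrm{gr}_\delta R$. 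The plan is to transport the non-cancellation computation to $\mathrm{gr}_\delta R$ and thereby conclude $\mathrm{in}_\delta(\kappa^e(f x^\mu)) \notin \mathrm{in}_\delta(I)$, hence $\delta(\psi(r)) = \lfloor D/p^e \rfloor$. Verifying that the Cartier operator and the leading-form operation interact well enough on the associated graded — essentially a Frobenius-descent statement for $\mathrm{in}_\delta$ — is the step I expect to be most delicate.

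Granting the lower bound $\delta(\psi(r)) = \lfloor D/p^e\rfloor$ in $R$, the estimate of the second paragraph yields $\delta(\bar f_i) < (K+2)p^{e_i}$ for every generator, and hence $d(J_e) = \mathcal{O}(p^e)$. Applying \Cref{elementaryproof} with $t = 1$ gives $\exp_F(\mathcal{D}) \le n$ and therefore $\cx(\mathcal{D}) \le p^n$, as required.
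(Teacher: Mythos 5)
Your skeleton is the paper's: translate through Fedder's isomorphism, bound the max norm of the generators of $J_e$ by $\mathcal{O}(p^e)$ by feeding test monomials $x^{\mu}$, $\mu \equiv p^e-1-\beta \pmod{p^e}$, into the gauge inequality, then count monomials (this is exactly \Cref{claim1} plus the counting step of \Cref{keylemma}; whether one counts in the max norm directly, as the paper does, or converts to total degree and quotes \Cref{elementaryproof} with $t=1$, as you do, is immaterial). The gap is the step you yourself flag as unverified, and it is worse than delicate: the statement you plan to prove is false. Minimality of the representative $f$ modulo $I^{[p^e]}$ does \emph{not} imply $\mathrm{in}_\delta(\kappa^e(fx^{\mu})) \notin \mathrm{in}_\delta(I)$ for an \emph{arbitrarily chosen} top monomial $x^{\beta}$ of $f$. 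Concretely, let $p=2$, $S=k[x,y]$, $I=(x+y)$, $e=1$, so $(I^{[2]}:I)=(x+y)$, and
\[ f = x^4y^4 + x^5 + x^2y^3 + x^5y^3 = (x^2y^2)^2 + (x^2)^2x + (xy)^2y + (x^2y)^2xy \in (I^{[2]}:I).\]
One checks digit by digit that $f$ is a minimal max-norm representative of its class modulo $I^{[2]}=((x+y)^2)$: its gauge $D=5$ is forced by the digit $x^2y$, since every $g \equiv x^2y \pmod I$ satisfies $\delta(g^2xy)\geq 5$. Now pick the top monomial $\beta=(5,0)$, so $\mu=(0,1)$. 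Then $\kappa(fy)=x^2$, and indeed $\delta_S(x^2)=2=\lfloor 5/2\rfloor$ as in your second paragraph; but $x^2 \equiv xy \pmod I$, so $\delta(\psi(\bar y))=1<2$, i.e.\ $\mathrm{in}_\delta(x^2)=\mathrm{in}_\delta(x(x+y)) \in \mathrm{in}_\delta(I)$. (The other top monomial, $x^5y^3$, does work.) Minimality only guarantees that \emph{some} top monomial survives, namely one lying in a digit that forces the minimal gauge, and which one that is cannot be read off from $\mathrm{in}_\delta(f)$, since the leading form mixes monomials from different residue classes mod $p^e$; note also that $\mathrm{gr}_\delta S$ for the max-norm filtration is very degenerate (already $x\cdot y=0$ there), so "transporting non-cancellation" to it is problematic for independent reasons. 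Gauge boundedness cannot rescue the step, because in your argument it is only invoked afterwards.

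The robust repair is to argue with \emph{all} digits at once instead of one top monomial, and then no minimality, no choice of monomial and no associated graded are needed. Assume $k$ perfect for simplicity and write an arbitrary representative $f$ of $\Psi^{-1}(\psi)$ as $f=\sum_{\nu} g_{\nu}^{p^e}x^{\nu}$ with $\nu$ ranging over $\{0,\dots,p^e-1\}^n$; then $g_{\nu}=\kappa^e(fx^{p^e-1-\nu})$, so $\overline{g_{\nu}}=\psi\bigl(\overline{x^{p^e-1-\nu}}\bigr)$ in $R$, and the gauge inequality applied to every $\nu$ yields $h_{\nu}\equiv g_{\nu} \pmod I$ with $\delta_S(h_{\nu})\leq K+1$. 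Setting $f'\coloneqq\sum_{\nu}h_{\nu}^{p^e}x^{\nu}$, additivity of Frobenius gives $f-f'=\sum_{\nu}(g_{\nu}-h_{\nu})^{p^e}x^{\nu}\in I^{[p^e]}$, while $\delta_S(f')\leq p^e(K+1)+p^e-1<(K+2)p^e$. Thus every generator class of $J_e$ coming from the gauge-bounded system has a representative of max norm $\mathcal{O}(p^e)$, which is precisely \Cref{claim1}, and your final counting paragraph finishes the proof. For comparison, the paper's own proof of \Cref{claim1} also fixes a single arbitrary top monomial and asserts the relevant equality of gauges in $R$ without further comment; so the subtlety you isolated is real and is the one point where the write-up deserves the digit-wise argument above, but the leading-form program you propose in its place cannot close it.
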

\begin{proof}
First, we assert
\begin{claim}
\label{claim1}
There are generators $f_1, \ldots, f_m$ of $J_e$ such that $\delta(f_i) \leq K p^e$ where $K$ is a constant independent of $e$.
\end{claim}

Assuming this claim we proceed as follows. Using the bound obtained from \Cref{claim1} and an argument just as in \Cref{keylemma} (with the maximum norm instead of the degree) yields

\[ k_e -k_{e-1} \leq K' p^{en}\] for some constant $K'$. Hence, $\cx(\mathcal{D})\leq p^n$.

It remains to prove \cref{claim1}:
We fix a generating system $\{\psi_{\gamma}\}$ as in \Cref{gaugebound} and $e \geq 1$. Further, choose $\psi_1, \ldots, \psi_r$ in our generating system which generate $\mathcal{D}_e$. As explained before \Cref{keylemma} $\psi_i$ is of the form $\kappa^e f_i$ and the $f_i$ form a system of generators of $J_e$. Our goal is to bound the gauges of these $f_i$. Therefore fix one and omit the index.

Let $\alpha$ be a multiindex such that $c x^\alpha$ is a monomial of $f$ such that $\delta(f) = \delta(c x^{\alpha})$. Write $\alpha = p^e r + \alpha'$ for unique $r, \alpha' \in \mathbb{N}^n$ with $0 \leq \alpha'_i \leq p^e -1$.
Then
\begin{align*}
\kappa^e\left(c x^\alpha x^{p^e - 1 - \alpha'}\right) = \kappa^e \left( c x^{p^e r + p^e - 1}\right) = x^r \kappa^e\left(c x^{p^e -1}\right) = x^r \kappa^e(c)
\end{align*}
and therefore
\[ \delta\left(\kappa^e\left(c x^\alpha x^{p^e - 1 - \alpha'}\right)\right) = \max \{ r_1, \ldots, r_n\} \eqqcolon r_\mathrm{max}.\]
On the other hand, we have
\begin{align*}
\delta \left( \kappa^e \left( cx^{\alpha}  x^{p^e -1 -\alpha'} \right) \right) = \delta\left( \kappa^e \left(f x^{p^e -1 -\alpha'}\right) \right) &= \delta \left( \psi_i\left(x^{p^e -1 - \alpha'}\right)\right)\\ &\leq \frac{\delta\big( x^{p^e-1 - \alpha'} \big)}{p^e} + K\\ &\leq \frac{p^e -1}{p^e} +K \leq 1 + K,
\end{align*}
where the first inequality holds due to \Cref{gaugebound} (ii).

Hence, $r_\mathrm{max} \leq 1 + K$ which is independent of $e$. Thus all $\alpha'_i$ and $r_i p^{e}$ are in $\mathcal{O}(p^e)$. Hence also all $\alpha_i$ which proves \cref{claim1}.
\end{proof}

We now come to the promised example showing that there are Cartier algebras of finite complexity which are not gauge bounded.

\begin{bei}
Let $R=k[x,y]$ for an $F$-finite field $k$ of prime characteristic $p$. It is verified in \cite[Example 7.13]{blicklestaeblerfuntestmodules} that 
\[ \mathcal{D} \coloneqq \bigoplus_{e \geq 0} \kappa^e \mathfrak{a}_e\] with $\mathfrak{a}_0 = R$ and $\mathfrak{a}_e = (x^2, xy^{ep^e})$ for $e \geq 1$ is a Cartier algebra which is not gauge bounded. Note that in \cite{blicklestaeblerfuntestmodules} the gauge is induced by the degree, but we may just as well use the one induced by the maximum norm as in \Cref{gaugebound} (i). However, the complexity of $\mathcal{D}$ is clearly one.
\end{bei}

\bibliographystyle{alpha}
\bibliography{Bibfile}
\end{document}